\documentclass[11pt]{article}
\usepackage{graphicx}
\usepackage{amssymb}
\usepackage{amsmath}
\usepackage{amsthm}
\usepackage{amsfonts}
\usepackage{url}
\usepackage{hyperref}
\usepackage{breakurl}
\newtheorem{thm}{Theorem}

\begin{document}
\bibliographystyle{plain}
\title{~\\[-40pt] 
Finding D-optimal designs by randomised decomposition and switching}

\author{Richard P. Brent\\
Australian National University}
\date{Dedicated to Kathy Horadam\\ on the occasion
	of her sixtieth birthday}
\maketitle
\thispagestyle{empty}                   

\begin{abstract}
A square $\{+1,-1\}$-matrix of order $n$ 
with maximal determinant is called a {\em saturated D-optimal design}.
We consider some cases of saturated D-optimal designs
where $n>2$, $n \not\equiv 0 \bmod 4$, so the Hadamard bound
is not attainable, but bounds due to Barba 
or Ehlich and Wojtas may be attainable. If $R$ is a matrix with
maximal (or conjectured maximal) determinant, then $G = RR^T$ 
is the corresponding
{\em Gram matrix}.  For the cases that we consider, maximal or conjectured
maximal Gram matrices are known. We show how to generate many
Hadamard equivalence classes of solutions from a given Gram matrix $G$, using
a randomised decomposition algorithm and row/column switching. 
In particular, we consider orders $26$, $27$ and~$33$,
and obtain new saturated D-optimal designs (for order $26$) and new
conjectured saturated D-optimal designs (for orders $27$ and~$33$).
\end{abstract}

\section{Introduction}		\label{sec:intro}

The {\em Hadamard maximal determinant} (maxdet) problem is to find the
maximum determinant $D(n)$ of a square $\{+1,-1\}$-matrix of given order~$n$.
Such a matrix $A$ with maximal $|\det(A)|$ is called a
{\em saturated D-optimal design} of order~$n$.
We are only concerned with the absolute value of the determinant,
as the sign may be changed by a row or column interchange.
 
Hadamard~\cite{Hadamard93} 
showed that $D(n) \le n^{n/2}$, and this bound is attainable 
for $n > 2$ only if $n \equiv 0 \bmod 4$.
The \emph{``Hadamard conjecture''} (due to Paley~\cite{Paley33}) 
is that Hadamard's bound is
attainable for all $n \equiv 0 \bmod 4$.
In this paper we are concerned with
``non-Hadamard'' cases $n > 2$, $n \not\equiv 0 \bmod 4$.
For such orders the Hadamard bound is not attainable, but other upper
bounds due to Barba~\cite{Barba33}, Ehlich~\cite{Ehlich64a,Ehlich64b}
and Wojtas~\cite{Wojtas64} may be attainable.
For lower bounds on $D(n)$, see Brent and Osborn~\cite{rpb249}, and
the references given there.

We say that two $\{+1,-1\}$ matrices $A$ and $B$ are
\emph{Hadamard-equivalent} (abbreviated H-equivalent) if $B$ can be obtained
from $A$ by a signed permutation of rows and/or columns.
If $A$ is H-equivalent to $B$ or to $B^T$ then we say that
$A$ and $B$ are \emph{extended Hadamard-equivalent}
(abbreviated HT-equivalent).
Note that, if $A$ is HT-equivalent to $B$, then
$|\det(A)| = |\det(B)|$.

If $A$ is H-equivalent to $A^T$ then we say that $A$ is {\em self-dual}.
We say that a Hadamard equivalence class is self-dual if the class
contains a self-dual matrix (equivalently, if the duals\footnote{We
use ``dual'' and ``transpose'' interchangeably.} of all matrices
in the class are also in the class).

If we know (or conjecture) $D(n)$, it is of interest to find all (or most)
Hadamard equivalence classes
of $\{+1,-1\}$ matrices with determinant $\pm D(n)$. In this paper we 
consider the orders $26$, $27$ and $33$; similar methods
can be used for certain other orders.

In \S\ref{sec:decomp} we consider the randomised decomposition of
(candidate) Gram matrices.  Then, in \S\ref{sec:switching}, we show how one
solution can often be used to generate other, generally not Hadamard
equivalent, solutions via switching. The graph of Hadamard equivalence
classes induced by switching is defined in \S\ref{sec:switching}. We
conclude with some new results for the orders $26$, $27$ and $33$
in \S\S\ref{sec:order26}--\ref{sec:order33}.

\subsection*{Upper bounds}
A bound which holds for all odd orders, and which is known to be
sharp for an infinite sequence of orders $\equiv 1 \pmod 4$, is 
\begin{equation} \label{eq:Barbabound}
D(n) \le (2n-1)^{1/2}(n-1)^{(n-1)/2},
\end{equation}
due independently to 
Barba~\cite{Barba33}
and Ehlich~\cite{Ehlich64a}.
We call it the {\em Barba} bound.
Brouwer~\cite{Brouwer83} showed that the Barba bound (\ref{eq:Barbabound}) 
is sharp if
$n = q^2 + (q+1)^2$ for $q$ an odd prime power. 
The bound is also sharp in some other
cases, e.g.~$q=2$ and $q=4$. It is not achievable unless $n$ is the sum
of two consecutive squares.

An upper bound due to Ehlich~\cite{Ehlich64b}
applies only in the case $n \equiv 3 \pmod 4$.
We refer to~\cite{rpb244,Ehlich64b,Orrick-www,Osborn02}
for details of this bound, which is rather complicated.
The Ehlich bound is not known to be sharp for any order $n > 3$.

Another bound, 
\begin{equation}			\label{eq:EWbound}
D(n) \le (2n-2)(n-2)^{(n-2)/2},
\end{equation}
due to Ehlich~\cite{Ehlich64a} and Wojtas~\cite{Wojtas64}, 
applies in the case $n \equiv 2 \pmod 4$.
It is known to be sharp in the following cases:
(A) $n = 2(q^2 + (q+1)^2)$, 
where $q$ is an odd prime power (see Whiteman~\cite{Whiteman90}); and
(B) $n = 2(q^2 + q + 1)$, where $q$ is any (even or odd) 
prime power~\cite{Spence75,KKS91}.

\subsection*{Gram matrices}
If $R$ is a given square matrix then the symmetric matrix $G = RR^T$ is
called the {\em Gram matrix} of $R$. 
We may also consider the {\em dual Gram matrix} $H=R^TR$.
Since $\det(G) = \det(R)^2$, the bounds mentioned above on $\det(R)$ 
are equivalent to bounds on $\det(G)$. Indeed, this observation explains
the form of the bounds.  For example, the Barba bound corresponds
to a matrix $G = (g_{i,j})$ given by 
$g_{i,j} = n$ if $i=j$ and $g_{i,j} = 1$ if $i \ne j$.
It is easy to show via a well-known rank-1 update formula 
that
$\det(G) = (2n-1)(n-1)^{n-1}$.

Given a symmetric matrix $G$ with suitable determinant, we say $G$ is a
{\em candidate Gram matrix}.  It is the Gram matrix of a
$\{+1,-1\}$ matrix if and only if it
decomposes into a product of the form $G = RR^T$, where
$R$ is a square $\{+1,-1\}$ matrix.

\section{Decomposition of candidate Gram matrices} \label{sec:decomp}

Suppose that a (candidate) Gram matrix $G$ of order $n$ is known.  
We want to find one or more
$\{+1,-1\}$ matrices $R$ such that $G = RR^T$.
Let the rows of $R$ be $r_1^T, \ldots, r_n^T$.
Then
\[r_i^Tr_j = g_{i,j},\; 1 \le i, j \le n.\]

If we already know the first $k$ rows, then we get $k$ 
{\em single-Gram}
constraints involving row $k+1$:
\[r_i^Tr_{k+1} = g_{i,k+1}\; {\rm for}\; 1 \le i \le k.\]
These are linear constraints in the unknowns $r_{k+1}$.
We may be able to find one or more solutions for row $k+1$ satisfying
the single-Gram constraints, or there may be no solutions, in which
case we have to backtrack.

Our algorithm is described in~\cite[\S4]{rpb244}, so we omit
the details here. We merely note that it is possible to take advantage
of various symmetries to reduce the size of the search space, and that
it is possible to prune the search using {\em gram-pair} constraints
of the form $G^{j+1} = RH^jR^T$ ($j > 0$)
if we know the dual Gram matrix $H = R^TR$.  In the cases considered
below, there is (up to signed permutations) only one candidate Gram
matrix with the required determinant, so there is no loss of generality
in assuming that $G = H$.

The search can be regarded as searching a (large) tree with (at most)
$n$ levels, where each level corresponds to a row of $R$.  A deterministic
search typically searches the tree in depth-first fashion~-- at each node,
recursively search the subtrees defined by the children of that node.
The aim is to find one or more leaves at the $n$-th level of the tree,
since these leaves correspond to complete solutions~$R$.

Deterministic, depth-first search may take a
long time searching fruitlessly for solutions in subtrees where no
solutions exist. When $G$ is decomposable, but difficult to decompose
using a deterministic search, we may be able to do better with a
randomised search.

In the randomised search, at each node we randomly choose a small number of
children and recursively search the subtrees defined by these children. A
good choice of the average number of children chosen per node, say $\mu$,
can be determined experimentally. Too small a value makes it unlikely that
a solution will be found; too large a value makes the search take too long.
We found empirically that $\mu \approx 1.3$ works well in the cases
considered below. Thus, at each node traversed in the search we choose one
child (if there are any) and, with probability about $\mu-1 \approx 0.3$,
also choose a second child (if there is one), then recursively search
the subtrees defined by the selected children.

For example, in the case $n=27$, there is a known Gram matrix $G$, due to
Tamura~\cite{Tamura05}, which decomposes into $RR^T$, giving a $\{+1,-1\}$
matrix $R$ of determinant
$546\times 6^{11}\times 2^{26}$. 
This determinant is conjectured to be maximal.

A deterministic search fails to decompose Tamura's $G$ 
in $24$ hours (exploring over $10^8$ nodes but reaching only
depth $17$ in the search tree).  
The tree size is probably greater than $4 \times 10^9$.

On the other hand, our randomised search routinely finds a decomposition of
$G$ in about $90$ seconds. In this way we have found many different
H-classes of solutions. Further details are given in \S\ref{sec:order27}.

\subsection*{Nonuniformity of sampling}

Unfortunately, the randomised search strategy described above does not
guarantee that the set of H-classes of solutions 
(or the set of all $\{+1,-1\}$ matrices of the given order and determinant) 
is sampled uniformly. There
are two reasons for lack of uniformity.  First, the tree-generation
algorithm introduces non-uniformity by taking advantage of symmetries to
reduce the size of the tree.  Second, the number of matrices in a class is
inversely proportional to the order of the automorphism group of the class,
so even if all the $\{+1,-1\}$ matrices were sampled
uniformly, the H-classes would not necessarily be sampled 
uniformly\footnote{To a certain extent, these two sources of bias may tend 
to cancel.}.

\section{Switching}	\label{sec:switching}
\emph{Switching} is an operation on square $\{+1,-1\}$ matrices 
which preserves the absolute value of the determinant 
but does not generally preserve Hadamard
equivalence or extended Hadamard equivalence.

Thus, switching can be
used to generate many inequivalent maxdet solutions from one solution.
This idea was introduced by Denniston~\cite{Denniston82}
and used to good effect by Orrick~\cite{Orrick08a}.
Similar ideas have been used by Wanless~\cite{Wanless04} and others
in the context of latin squares.

We only consider {switching a closed quadruple} of rows/columns.
There are other possibilities, e.g.\ switching Hall sets~\cite{Orrick08a}.

\subsection*{Switching a closed quadruple of rows/columns}

Suppose that a $\{+1,-1\}$ matrix $R$ is H-equivalent to a matrix having
a {\em closed quadruple} of rows, i.e.\ four rows of the form%
\footnote{We write ``$+$'' for $+1$ and ``$-$'' for $-1$.}:
\[
\left [
\begin{matrix} 
+ \cdots +& - \cdots - & - \cdots - & + \cdots + \\
+ \cdots +& - \cdots - & + \cdots + & - \cdots - \\
+ \cdots +& + \cdots + & - \cdots - & - \cdots - \\
+ \cdots +& + \cdots + & + \cdots + & + \cdots + \\
\end{matrix}
\right ]
\]
Then \emph{row switching} flips the sign of the leftmost block, giving
\[
\left [
\begin{matrix} 
- \cdots -& - \cdots - & - \cdots - & + \cdots + \\
- \cdots -& - \cdots - & + \cdots + & - \cdots - \\
- \cdots -& + \cdots + & - \cdots - & - \cdots - \\
- \cdots -& + \cdots + & + \cdots + & + \cdots + \\
\end{matrix}
\right ]
\]

This is H-equivalent to flipping
the signs of all but the leftmost block, which has a
nicer interpretation in terms of switching edges in the corresponding
bipartite graph~\cite{McKay79,McKay09}.

It is easy to see that row switching preserves the inner products of
each pair of columns of R, so preserves the dual Gram matrix $R^TR$, and
hence preserves $|\det(R)|$.  However, it does not generally preserve
H-equivalence or HT-equivalence.

\emph{Column switching} is dual to row switching~-- 
instead of a closed quadruple of
four rows, it requires a closed quadruple of four columns.

\subsection*{Equivalence classes generated by switching, and their graphs}

Let $\cal A$ and $\cal B$ be two H-equivalence classes of matrices.
We say that $\cal A$ and $\cal B$ are {\em switching-equivalent}
(abbreviated ``S-equivalent'') if there exists $A \in \cal A$
and $B \in \cal B$ such that $A$ 
can be transformed to $B$ by a sequence of
row and/or column switching 
operations\footnote{S-equivalence is the same as
Orrick's {\em Q-equivalence}~\cite{Orrick08a,Orrick08b}
in the cases that we consider, but the concepts are different
if $n \equiv 4 \bmod 8$.}.
The size of an S-equivalence class $\cal C$, denoted by $||{\cal C}||_S$,
is the number of H-equivalence classes that it contains.

If the H-equivalence classes corresponding to matrices $A$ and $B$ 
are in the same S-equivalence
class, then we write $A \leftrightarrow B$. Thus, this notation means that
there is a sequence of row/column switches that transforms $A$ to 
a matrix H-equivalent to $B$. We say that $A$ is {\em S-equivalent} to $B$.

If $\cal A$ and $\cal B$ are two HT-equivalence classes of matrices, then we
say that $\cal A$ and $\cal B$ are {\em ST-equivalent} if there exists 
$A \in \cal A$ and $B \in \cal B$ such that $A$ can be transformed to $B$ by a
sequence of row and/or column switching operations\footnote{Thus,
for all $\alpha \in \cal A$ and $\beta \in \cal B$, we have
{\em either} $\alpha \leftrightarrow \beta$ or $\alpha \leftrightarrow
\beta^T$.}.
The size of an
ST-equivalence class $\cal C$, denoted by $||{\cal C}||_{ST}$, is the number
of HT-equivalence classes that it contains.

We say that two {\em matrices} $A$ and $B$ are ST-equivalent
if the corresponding HT-classes ${\cal A} \ni A$ and
${\cal B} \ni B$ are ST-equivalent.
Thus, two matrices $A$ and $B$ are ST-equivalent
if a matrix H-equivalent to $B$ 
can be obtained from $A$ by a sequence
of row switches, column switches and/or transpositions.

The {\em weight} $w(H)$ of a matrix $H$ 
(or of the Hadamard class ${\cal H} \ni H$) 
is defined by
\begin{equation}
w(H) = w({\cal H}) = \frac{1}{|{\rm Aut}(H)|}\,,
\end{equation}
where ${\rm Aut}(H)$ is the automorphism group
of $H$.
The {\em weight} 
$w({\cal C})$ of an S-class $\cal C$ is defined by
\begin{equation}
w({\cal C}) = \sum_{{\cal H} \in {\cal C}} w({\cal H})\,. \label{eq:weight}
\end{equation}
The probability of finding a class by uniform random sampling of $\{+1,-1\}$
matrices is proportional to the weight of the class, so the classes with
smallest weight are in some sense the hardest to find. (However, as 
observed at the end of \S\ref{sec:decomp}, we do not sample uniformly.)

Associated with an S-equivalence class ${\cal S} = \{H_1, \ldots, H_s\}$
of size $s$ there is a graph\footnote{We ignore any loops or multiple
edges, so all graphs considered here are simple.} 
$G = G({\cal S})$ whose vertices are
the H-classes $H_1, \ldots, H_s$ contained in ${\cal S}$, and where
an edge connects two distinct vertices $H_i, H_j$ if a matrix in $H_i$ can
be transformed to a matrix in $H_j$ by a single row/column switching
operation.  Similarly, for an ST-equivalence class ${\cal C} = \{H_1,
\ldots, H_s\}$ of size~$s$ there is a graph $G = G({\cal C})$ whose vertices
are the HT-classes $H_1, \ldots, H_s$ contained in ${\cal C}$, and where an
edge connects two distinct vertices $H_i, H_j$ if a matrix in $H_i$ can be
transformed to a matrix in $H_j$ by a single row/column switching operation,
possibly combined with transposition.

For example, it is known~\cite{ILL81,Kimura89} that there are $60$ H-classes of Hadamard
matrices of order $24$. These form two S-classes, of size $1$ and $59$.
Similarly, there are $36$ HT-classes, giving two ST-classes, of size $1$ and
$35$.  In each case the class of size $1$ contains the Paley matrix, which
has no closed quadruples. 

\section{Results for order $26$}		\label{sec:order26}

For order $26$ the
maximal determinant is $D(26) = 150 \times 6^{11} \times 2^{25},$
meeting the Ehlich-Wojtas bound~(\ref{eq:EWbound}), and the corresponding 
Gram matrix $G$ is unique up to symmetric signed permutations.
Without loss of generality we can assume that $G$ has a diagonal
block form with blocks of size
$13 \times 13$ (see~\cite{Ehlich64a,Orrick-www,Wojtas64}).
There are exactly three H-inequivalent maxdet matrices composed of circulant
blocks~\cite{Yang68,KKNK94}. However, there are many solutions that
are not composed of circulant blocks.
Orrick~\cite[Sec.~7]{Orrick08b} 
found $5026$ HT-classes ($9884$ H-classes) 
of solutions by a combination of hill-climbing (local optimisation)
and switching.

Using randomised decomposition of the Gram matrix $G$ followed by
switching, we have found $39$ further H-classes ($23$ HT-classes).
Thus, there are at least $9923$ H-classes ($5049$ HT-classes)
of saturated D-optimal designs of order $26$.
Since the randomised decomposition program has repeatedly found the same
set of $9923$ H-classes without finding any more, it is reasonable
to conjecture that this is all. An exhaustive search to prove this
may be feasible, but has not yet been attempted.

It is known~\cite{Ehlich64a,Orrick08b} that there are two
\emph{types} of maxdet matrices of order $n=26$, related to the two ways
that the row sums $2n-2=50$ of the Gram matrix 
can be written as a sum of squares:
\[50 = 7^2 + 1^2 = 5^2 + 5^2.\] They are called ``type $(7,1)$''
and ``type $(5,5)$'' respectively.
The type is preserved by switching. 
If a maxdet
matrix $R$ of order $26$ is normalised so that $RR^T = R^TR = G$,
then 
\[\lambda(R) := \sum_i \big|\sum_j r_{i,j}\big| \]
determines the type of $R$: maxdet matrices of type $(7,1)$ 
have $\lambda(R) = 182$,  
and those of type $(5,5)$ have $\lambda(R) = 130$.

There are $5049$ HT-classes ($9923$ H-classes) 
which lie in $18$ ST-classes ($25$ S-classes).
There is one ``giant'' ST-class ${\cal G}$ with 
size $||{\cal G}||_{ST} = 4323$, consisting of type $(5,5)$ matrices.

There is another ``large'' ST-class ${\cal E}$ with 
$||{\cal E}||_{ST} = 686$, consisting of type $(7,1)$ matrices.

\begin{table}
\centering
\label{tab:table1}
\begin{tabular}{|c|c|c|c|c|c|}
\hline
$||{\cal C}||_S$ &$||{\cal C}||_{ST}$  & $w({\cal C})$ & type & splits & notes\\
\hline
8545	& 4323	& 229955/52	& $(5,5)$& no & ${\cal G} =\, <\!R_3\!>$\\ 
7	& 4	& 9/2		& $(5,5)$& no & new	\\	
4	& 3	& 3/2		& $(5,5)$& no &	\\	
1	& 1	& 1/2		& $(5,5)$& no &	\\	
1	& 1	& 1/6		& $(5,5)$& no & new	\\	
1	& 1	& 1/78		& $(5,5)$& no &	\\ 	
5, 5	& 5	& 11/6, 11/6	& $(5,5)$& yes &	\\	
4, 4	& 4	& 2, 2		& $(5,5)$& yes &	\\	
1, 1	& 1	& 1/2, 1/2	& $(5,5)$& yes & new	\\	
1, 1	& 1	& 1/3, 1/3	& $(5,5)$& yes & new	\\ 	
1, 1	& 1	& 1/6, 1/6	& $(5,5)$& yes &	\\	
1310	& 686	& 3046/3	& $(7,1)$& no &	${\cal E}$ \\	
19	& 11	& 6		& $(7,1)$& no & new	\\	
1	& 1	& 1/3		& $(7,1)$& no & new	\\	
1	& 1	& 1/39		& $(7,1)$& no & new	\\ 	
1	& 1	& 1/78		& $(7,1)$& no & $<\!R_2\!>$\\ 
3, 3	& 3	& 2/3, 2/3	& $(7,1)$& yes & new	\\	
1, 1	& 1	& 1/78, 1/78	& $(7,1)$& yes & $<\!R_1\!>$\\ 
\hline
9923	& 5049	& 852013/156	& ---	 & --- & totals \\
\hline
\end{tabular}
\caption{25 S-classes and 18 ST-classes for order 26}
\end{table}

Each ST-class $\cal C$ of size $s = ||{\cal C}||_{ST}$  
corresponds to either 
one S-class ${\cal C}_1$ (of size $||{\cal C}_1||_S < 2s$) 
or two S-classes ${\cal C}_1$, ${\cal C}_2$ 
(each of size $||{\cal C}_i||_S = s$),
depending on whether or not the ST-class contains a self-dual matrix.
In the former case we say that the ST-class is {\em self-dual},
otherwise we say that the ST-class {\em splits}.
For example, the ST-class of size $11$ 
is self-dual and
corresponds to an S-class of size $19$, 
but the ST-class of size $5$ 
splits to give two S-classes of size $5$. 
Details of all the known classes are given in Table~1. 
The third column of the table gives the weight(s) of the
S-class(es) in that row, where the weight is defined by
(\ref{eq:weight}) above. 
The entries labelled ``new'' are not given in
Orrick's paper~\cite{Orrick08b}.
The classes labelled $<\!R_i\!>$ ($1 \le i \le 3$),
are generated by matrices composed
of circulant blocks, using the notation of~\cite{Orrick-www}.
The graph associated with the ST class of size 11 is shown
in Figure~1.

The largest automorphism group
order is $22464 = 2^6\cdot 3^3\cdot 13$, and all group
orders divide $22464$. The distribution of group orders
is given in Table~2. In the table, the columns headed
``\#'' give the number of times that the corresponding group order
occurs. A list of (representatives of) H-classes and 
their group orders is available from~\cite{Brent-www}.

To summarise the main results, we have:

\begin{figure} \label{fig:1}
\begin{center}
\includegraphics[width=8cm]{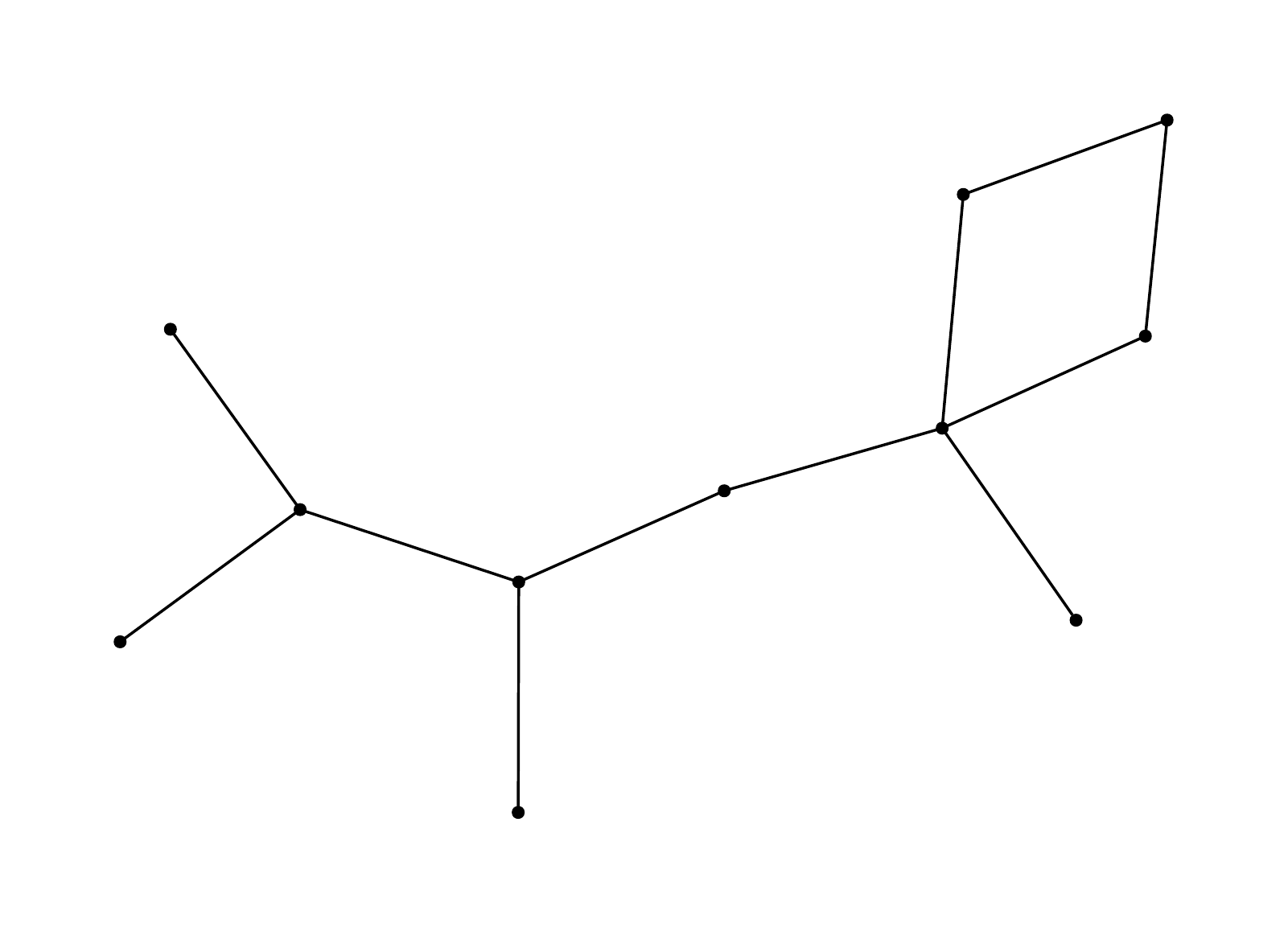} 
\end{center}
\caption{The ST-class of size 11 for maxdet matrices of order 26}
\end{figure}

\begin{thm}
For order $26$ there are at least $9923$ Hadamard classes of
$\{+1,-1\}$ matrices
with determinant $150 \times 6^{11} \times 2^{25}$.
They lie in at least $25$ switching classes, as given in
Table~$1$.	
\end{thm}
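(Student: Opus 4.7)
The plan is to prove the theorem constructively by exhibiting explicit representatives of each of the $9923$ Hadamard classes claimed and verifying that they partition into the $25$ switching classes described in Table~1. Since the statement is only a lower bound (``at least''), no exhaustive enumeration argument is needed; it suffices to produce and catalogue enough inequivalent solutions.

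First, I would start from the unique (up to symmetric signed permutation) candidate Gram matrix $G$ of order~$26$ with $\det(G) = (150\cdot 6^{11}\cdot 2^{25})^2$, in the block-diagonal form with two $13\times 13$ blocks as in \cite{Ehlich64a,Wojtas64}. Seed the search with the three classical circulant-block solutions $\langle R_1\rangle$, $\langle R_2\rangle$, $\langle R_3\rangle$ of Yang and \cite{KKNK94}, together with the $5026$ HT-classes produced by Orrick~\cite{Orrick08b}. Then apply the randomised decomposition algorithm of \S\ref{sec:decomp} to $G$ repeatedly (with $\mu\approx 1.3$) to generate a large pool of decompositions $R$ with $RR^T=G$, retaining only those not H-equivalent to anything previously found. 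To this pool apply row/column switching on every closed quadruple, and transposition, iterating to closure; this both produces the ``new'' H-classes listed in Table~1 and reveals the S-/ST-edges needed for the graph structure.

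Second, I would carry out the equivalence bookkeeping. For each matrix $R$ produced, compute a canonical form for its H-class (e.g.\ via \textsc{nauty} on the bipartite signed graph naturally associated to $R$), and similarly for its HT-class using the pair $\{R,R^T\}$; this simultaneously yields $|\mathrm{Aut}(R)|$ and hence the weight $w(R)=1/|\mathrm{Aut}(R)|$. Classify each class by its type via the invariant $\lambda(R)=\sum_i|\sum_j r_{i,j}|\in\{130,182\}$, and check that switching preserves this invariant (as noted in \S\ref{sec:order26}). Build the S-graph and ST-graph by recording every edge produced by a single switching operation; the connected components are the S- and ST-classes, and self-duality of an ST-class is detected by checking whether any representative is H-equivalent to its transpose, which determines whether the ST-class corresponds to one S-class or splits into two. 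Finally, tabulate the $18$ ST-components and $25$ S-components with their sizes, weights and types, and verify row-by-row agreement with Table~1; the totals $9923$ and $25$ then follow by summation.

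The main obstacle is not conceptual but computational: reliably certifying H- and HT-inequivalence across thousands of candidate matrices, and ensuring that the switching closure has been computed exhaustively within each discovered S-class. Canonical-form computation makes inequivalence rigorous, and switching closure is finite once the class is bounded, so both steps terminate; the delicate part is that the \emph{lower bound} assertion of the theorem requires only that the exhibited list be correct, not that it be complete, so the nonuniformity of sampling noted in \S\ref{sec:decomp} does not threaten the proof. The weights, group orders and Figure~1 are then simply reported data from the verified list.
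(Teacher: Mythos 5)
Your proposal is correct and matches the paper's approach: the proof is purely computational, consisting of exhibiting explicit generators (representatives of the $18$ ST-classes, posted on the author's website), closing under row/column switching and transposition, certifying H-/HT-inequivalence via canonical forms (\emph{nauty}), and noting that the ``at least'' phrasing means only the correctness of the exhibited list, not its completeness, must be verified. The paper's proof is simply a terser statement of the same verification you describe.
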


\begin{proof}
The proof is computational. 
On our website~\cite{Brent-www} we give representatives
of each of the $18$ ST-classes. From these ``generators'', a program
that implements switching can find all $5049$ HT-classes;
this requires only 12 iterations of row/column switching and taking duals.
By taking duals of the $7$ generators that are not self-dual, we obtain
$25$ generators for the $9923$ H-classes.
\end{proof}

\begin{table}
\centering
\label{tab:table2}
\begin{tabular}{|c|c||c|c||c|c||c|c|}
\hline
Order & \# & Order & \# & Order & \# & Order & \# \\
\hline
1 & 2823 &
2 & 4086 &
3 & 41 &
4 & 1840\\
6 & 151 &
8 & 607 &
12 & 106&
16 & 143\\
24 & 20&
32 & 44&
36 & 6&
39 & 1\\
48 & 13&
64 & 13&
72 & 8&
78 & 6\\
96 & 3&
108 & 1&
156 & 2&
216 & 2\\
288 & 4&
576 & 2&
22464 & 1& & \\
\hline
\end{tabular}
\caption{Group orders of $9923$ H-classes for order 26}
\end{table}

\section{Results for order $27$}		\label{sec:order27}

It is known that the maximal determinant $D(27)$ for order $27$ satisfies
\[546 \le \frac{D(27)}{6^{11}\times 2^{26}} < 
  565,\]
where the lower bound is due to Tamura~\cite{Tamura05},
and the upper bound is the (rounded up) Ehlich bound~\cite{Ehlich64b}.
It is plausible to conjecture that the lower bound 
$546 \times 6^{11} \times 2^{26}$ is maximal,
since it is $0.9673$ of the Ehlich bound and has not
been improved despite attempts using optimisation
techniques that have been successful for other orders~\cite{Orrick-www}.
Unfortunately, proving that the lower bound is maximal seems
difficult~-- the technique used in~\cite{rpb244} to prove analogous
results for orders $19$ and $37$ is impractical for order $27$ due
to the size of the search space.

Tamura found a $\{+1,-1\}$ matrix $R$, with determinant
$546\times 6^{11}\times 2^{26}$.
The corresponding Gram matrix $G=RR^T$ has a block form with diagonal blocks
of sizes $(7,7,7,6)$. 
Orrick~\cite{Orrick08b} showed that Tamura's matrix $R$ generates 
an ST-class ${\cal T}$ with
$||{\cal T}||_{ST} = 33$.
The ST-class ${\cal T}$ splits into two S-classes, each containing
33 H-classes.

Using randomised decomposition of Tamura's (conjectured maximal) Gram
matrix, followed by switching, we have the following result.

\begin{thm}
There are at least $6489$ HT-classes $(12911$ H-classes$)$ of
$\{\pm 1\}$ matrices of order 27 
with determinant $546\times 6^{11} \times 2^{26}$.
They lie in at least $204$ ST-classes $(388$ S-classes$)$. 
The largest ST-class contains at least $5765$ HT-classes 
$(11483$ H-classes$)$.
\end{thm}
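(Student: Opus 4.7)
The plan is to mirror the computational proof of Theorem~1. First I would take Tamura's Gram matrix $G$ of order $27$ (with block sizes $(7,7,7,6)$) and run the randomised decomposition algorithm of \S\ref{sec:decomp} repeatedly, with branching parameter $\mu\approx 1.3$. Each successful run produces, in roughly ninety seconds, a $\{+1,-1\}$ matrix $R$ with $RR^T=G$ and $|\det R|=546\times 6^{11}\times 2^{26}$. To decide whether a newly produced $R$ is genuinely new I would reduce each matrix to a canonical form under H-equivalence (for example via \texttt{nauty} applied to the signed bipartite graph of rows and columns), and to a canonical form under HT-equivalence by canonicalising the unordered pair $\{R,R^T\}$.

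Second, starting from every H-class representative found so far, I would enumerate all closed quadruples of rows and of columns and apply the row/column switching of \S\ref{sec:switching}. The switched matrix has the same $|\det|$ but in general a different $RR^T$ and a different H-class; canonicalising and testing against the current pool both discovers new H-classes (the main source of the eventual $12911$) and records an edge of the switching graphs $G(\mathcal{S})$ and $G(\mathcal{C})$ from \S\ref{sec:switching}. I would iterate the pair (randomised decomposition, switching closure) until several passes produce no new class. Once the collection is stable, I count vertices to extract the $12911$, $6489$, $388$ and $204$ totals, and read off the size of the largest connected component in the ST-graph to obtain the $5765$ HT-classes (equivalently $11483$ H-classes) of the giant ST-class.

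The main obstacle is not the algorithm, which is essentially the one that settled order~$26$, but making the exploration broad enough to reach the stated numbers. Tamura's $G$ is the recalcitrant matrix of \S\ref{sec:decomp}: deterministic search on it fails in 24 hours, and even the randomised search yields only one decomposition per minute or two, so many CPU-days of seeding are required before switching can knit the pool together into the giant ST-component and the $203$ smaller components. Because the theorem claims only \emph{lower} bounds, no exhaustiveness argument is needed; it suffices to make the output reproducible. I would therefore deposit representatives of each of the $204$ ST-classes on the website~\cite{Brent-www}, together with a driver that, given these generators, performs the switching closure, canonicalises, and verifies every count in the statement.
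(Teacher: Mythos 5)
Your proposal matches the paper's argument: the published proof is likewise computational, depositing representatives of the $204$ ST-classes on the website and verifying that a switching program recovers all $6489$ HT-classes (in $28$ iterations of row/column switching and taking duals), with the $388$ S-class generators obtained by adjoining duals of the $184$ non-self-dual generators. Your observation that only lower bounds are claimed, so reproducible exhibition of the classes suffices, is exactly the logic the paper relies on.
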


\begin{proof}
The proof is computational. 
On our website~\cite{Brent-www} we give representatives
of each of the $204$ ST-classes. From these ``generators'', a program
that implements switching can find all $6489$ HT-classes;
this requires only 28 iterations of row/column switching and taking duals.
By taking duals of the $184$ generators that are not self-dual, we obtain
$388$ generators for the $12911$ H-classes.
\end{proof}

Details of the $204$ known ST-classes are summarised in Table~3.
Twenty of these ST-classes are self-dual; the remaining $184$ each split into
two S-classes. In the table, the columns headed 
``$||{\cal C}||_{ST}$'' give the size
of an ST-class $\cal C$, the next columns ``\#''
give the number of such classes, and the columns ``\#split'' 
give the number of these that split into two S-classes.

\begin{table}
\centering
\label{tab:table3}
\begin{tabular}{|c|c|c||c|c|c||c|c|c|}
\hline
$||{\cal C}||_{ST}$  & \# & \#split &
$||{\cal C}||_{ST}$  & \# & \#split &
$||{\cal C}||_{ST}$  & \# & \#split \\
\hline
5765	& 1 & 0 & 
36	& 1 & 1& 
33	& 1 & 1\\
28	& 1 & 1&
21	& 1 & 1&
18	& 2 & 2\\
14	& 2 & 2&
12	& 4 & 4&
11	& 1 & 1\\
9	& 2 & 2&
8	& 3 & 3&
7	& 7 & 5\\
6	& 12 & 12&
5	& 11 & 9&
4	& 12 & 11\\
3	& 18 & 17&
2	& 38 & 33&
1	& 87 & 79\\
\hline
\end{tabular}
\caption{204 ST-classes for order 27}
\end{table}

There is one ``{giant}'' class ${\cal G}$ of size 5765
HT-classes (11483 H-classes)
and 203 small classes (maximum size 36 HT-classes).
Tamura's matrix $R$ generates the third-largest class, 
of size 33 HT-classes (66 H-classes).
Unlike order 26 (see \S\ref{sec:order26}), there is no obvious subdivision of
the classes into types.

Automorphism group orders for the 12911 H-classes are summarised
in Table~4. Tamura's matrix $R$ has group order~$3$.

\begin{table}[h]
\centering
\label{tab:table4}
\begin{tabular}{|c|c|}
\hline
Order & multiplicity\\
\hline
1 & 12738\\
2 & 26 \\
3 & 131 \\
6 & 16\\
\hline
\end{tabular}
\caption{Distribution of group orders for $12911$ H-classes}
\end{table}

\begin{figure} \label{fig:2}
\begin{center}
\includegraphics[width=6cm]{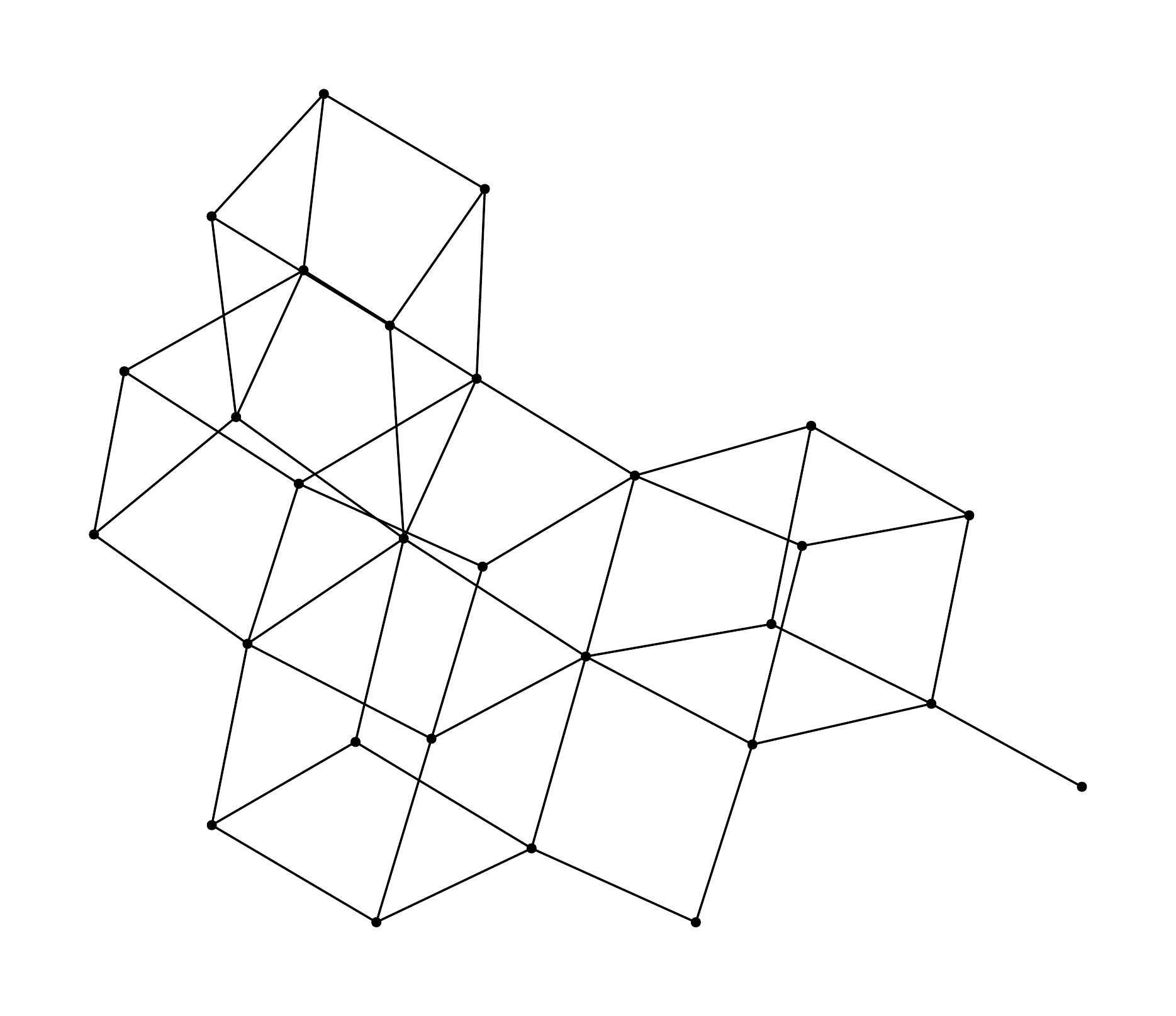}
\includegraphics[width=6cm]{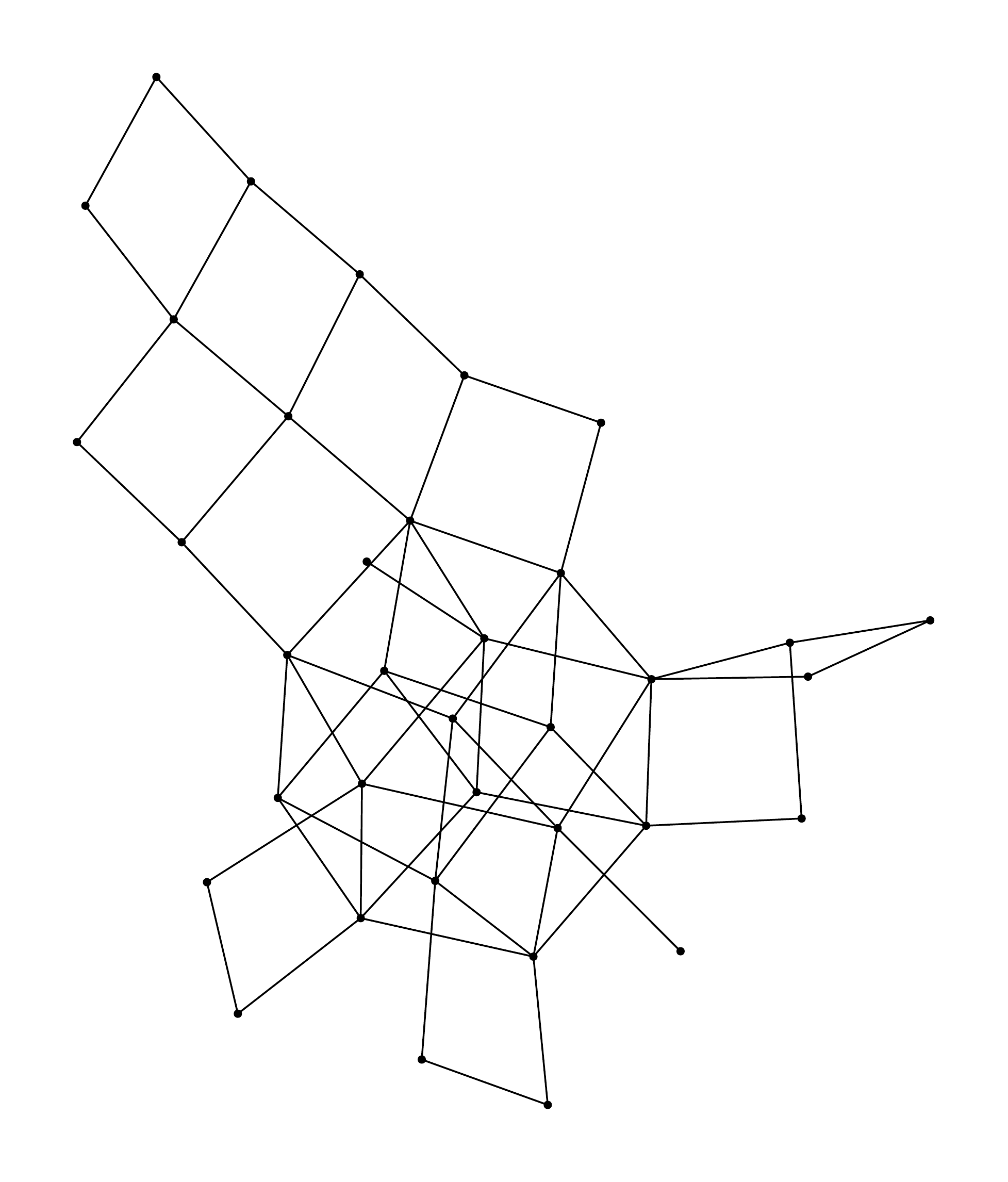}
\end{center}
\caption{ST-classes of size 28 and 36 for matrices of order 27}
\end{figure}

\section{Results for order $33$}		\label{sec:order33}

For order $33$, the Barba bound~(\ref{eq:Barbabound}) gives
$D(33) < 516\times 2^{74}$. Using the algorithm described
in~\cite{rpb244}, we have shown that none of the $13670$ candidate 
Gram matrices $G$ satisfying
$\det(G)^{1/2} \ge 470 \times 2^{74}$
can decompose into a product $RR^T$,
where $R \in \{+1,-1\}^{33 \times 33}$. Thus, we have
$D(33) < 470 \times 2^{74}$.

On the other hand, Solomon~\cite{Solomon02,Orrick-www}
found a matrix $R \in \{+1,-1\}^{33 \times 33}$ with 
$\det(R) = 441 \times 2^{74}$,
which is greater than $0.9382$ of the upper bound.
Thus, we know that
\[441 \le D(33)/2^{74} < 470.\]

It is plausible to conjecture that the lower bound
is best possible and $D(33) = 441 \times 2^{74}$.
Proving this seems difficult, for reasons given
in~\cite[\S7.1]{rpb244}.
In this section we
find a large number of H-classes of solutions
with determinant $441 \times 2^{74}$. 
Even if the conjecture proves to be incorrect,
the same techniques should be applicable to find many or all H-classes
of solutions with larger determinant.

Starting from the Gram matrix $G = R^TR = RR^T$ corresponding to
Solomon's $\{+1,-1\}$ matrix $R$, 
our randomised tree search algorithm
can find many solutions with the same determinant.

Using row/column switching and duality, we can find a huge number of
inequivalent solutions. For example, starting from Solonon's matrix $R$ and
iterating the operation of row switching only, we found $37030740$ H-classes
in 11 iterations before stopping our program because it was using too much
memory. Clearly a different strategy is needed.

\subsection*{Exploring the switching graph using random walks}

Given solutions $A_0$ and $B_0$, we can generate random walks
$(A_0,A_1,A_2,\ldots)$ and $(B_0,B_1,B_2,\ldots)$ in the graph
defined by row/column switching and transposition. Each vertex on a walk is
connected by a row/column switching operation and possibly transposition
to its successor.

If $A_0$ and $B_0$ are in different connected components, then the two
random walks can\,not intersect.  However, if $A_0$ and $B_0$ are in the
same connected component, of size $s$ say, then we expect the two walks to
intersect eventually, and probably after $O(\sqrt{s})$ steps unless the
mixing time of the walks is too long (this depends on the geometry of the
component, which is unknown).

Our implementation uses self-avoiding random walks. Each walk is stored in
a hash table so we can quickly check if a new vertex has already been 
encountered in the same walk (in which case we try one of its neighbours)
or in the other walk (in which case we have found an intersection).
If, during a walk, all neighbours of the current vertex have been visited,
then it is necessary to backtrack. This occurs rarely since the mean degree
of a vertex is large (see below).

We fix $A_0 = R$ and choose $B_0$ randomly. Usually (about 90\% of the time)
$R$ and $B_0$ are in the same connected component, nearly always the
``giant'' component ${\cal G}$.  Otherwise, $B_0$ is in a ``small''
component (of size say~$s$) and we discover this by being unable to continue
the self-avoiding walk from $B_0$ past $B_{s-1}$.

In this way we find many vertices of the giant component ${\cal G}$, 
and also many ``small'' ST-classes.

We can gather statistics from the random walks.  For example, we would like
to estimate $||{\cal G}||_{ST}$, the total number of
HT-classes (i.e.~connected components) in the graph, 
the number of ST-classes, the mean degree of each vertex, etc.

If implemented as
described above, the random walks are not uniform over the vertices of
the connected components containing their starting points.
They are approximately uniform over \emph{edges}, so the probability of 
hitting a vertex $v$ depends on the degree $\deg(v)$.

We can either take this into account when gathering statistics,
or avoid the problem by accepting a candidate vertex $v$
with probability {$1/\deg(v)$}. In this way
the vertices are sampled uniformly if the walks are long enough.
The drawback is that we have to compute the degrees of all candidate
vertices (all neighbours of the current vertex), which might be
time-consuming.

\subsection*{Results from random walks}

We estimate that the overall size of the graph is $(3.08\pm 0.09) \times 10^9$
when measured in HT-classes.  (In terms of H-classes the numbers
are roughly doubled, since self-dual classes are rare.)
The giant component ${\cal G}$ has size
$||{\cal G}||_{ST} \approx (2.83\pm 0.08) \times 10^9$.
In ${\cal G}$ the mean degree of each vertex is about $20$, so there
are about $2.83 \times 10^{10}$ edges.

We estimate that there are about $5 \times 10^7$ small ST-classes,
with mean size about 5.  Of these we found more than $8\times 10^4$ so far,
with the largest having size $2136$ (see Table~5).

We have found $1639$ singletons (ST-classes of size~1)
and $5412$ self-dual matrices. One self-dual singleton was found.

The automorphism group orders observed during random walks are in $\{1,2,4\}$,
with orders greater than~$1$ being rare. Solomon's $R$ has group order~$2$.

Although most of these observations are imprecise, since they depend on
random sampling, we can at least claim the following:

\begin{thm}
For order $33$ and determinant $441\times 2^{74}$, 
the ST-class ${\cal G}$ generated by Solomon's matrix $R$ is self-dual
and has size $||{\cal G}||_{ST} > 197\times 10^6$.
There are at least $8\times 10^4$ smaller ST-classes, $20$ of
which are listed in Table~$5$.
\end{thm}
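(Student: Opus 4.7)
The plan is to give a computational proof built entirely on the machinery of Section~\ref{sec:switching} and the random-walk apparatus developed earlier in this section. First I would address self-duality of ${\cal G}$ and the presence of Solomon's matrix $R$ in it. Since transposition is a permitted move in ST-equivalence, self-duality is witnessed as soon as some walk starting at $R$ reaches a matrix H-equivalent to $R^T$; I would search for and then record an explicit sequence of row/column switches from $R$ to such a matrix, after which self-duality is verifiable in a few milliseconds by replaying the sequence and canonicalising the endpoint.

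For the lower bound $||{\cal G}||_{ST} > 197\times 10^{6}$, I would run a large-scale iterated switching exploration rooted at $R$, no longer using the probabilistic intersection strategy but aiming instead for accumulation. Each newly generated matrix is reduced to an H-canonical form (for example via a McKay-style canonical labelling of the associated bipartite $\pm 1$ graph) and then, because self-duality of ${\cal G}$ has already been established, we may take as HT-canonical form the lexicographic minimum of the canonical form and that of its transpose. These canonical forms are stored in a hash table to detect duplicates, and the bound is obtained by running the exploration until the table reaches the required size. Connectedness within ${\cal G}$ is automatic because every insertion is produced from an existing vertex by a single permitted move.

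For the claim of at least $8\times 10^{4}$ smaller ST-classes, I would reuse the random walks that terminate quickly (the situation described earlier in the section, where $B_0$ lies in a small component and the self-avoiding walk dies after $s-1$ steps) together with further seeds supplied by randomised decomposition of the Gram matrix $G$. For each small candidate class I would enumerate its switching/transposition closure exhaustively — feasible because the largest such class has size $2136$ — and then certify disjointness from ${\cal G}$ by querying the hash table of Step~2 on every vertex of the closure and on every immediate switching neighbour of every vertex of the closure. Any small class that passes this test is recorded; collecting $\geq 8\times 10^{4}$ such classes proves the second half of the theorem, and Table~5 is just a curated extract from the log.

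The hard part is engineering, not mathematics. The canonical form must be both sound (an injection on H-classes) and fast enough to be invoked on every switching neighbour encountered during an exploration that produces on the order of $10^{8}$ distinct vertices; the associated hash table will consume many gigabytes and must be tuned carefully. A secondary subtlety is that disjointness of a small class from ${\cal G}$ can only be certified one-sidedly — we check that no \emph{single} switching move from the class reaches a known vertex of ${\cal G}$ — but since the theorem asserts merely that the small classes are distinct from ${\cal G}$ as currently enumerated, this local check is sufficient. All other facts in the statement (self-duality of ${\cal G}$, the size threshold, the list in Table~5) reduce to reading off outputs from this pipeline.
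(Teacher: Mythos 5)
Your proposal is essentially the paper's proof: the size bound is obtained exactly as you describe, by iterated row/column switching and transposition from $R$ with canonical-form hashing (the paper reports $197122852$ HT-classes after $9$ iterations), and the small ST-classes come from random walks whose seeds land outside the giant component, with each small component enumerated exhaustively.

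The one place where your plan as literally written would fail in practice is the self-duality step. You propose to witness $R \leftrightarrow R^T$ by running ``some walk starting at $R$'' until it reaches a matrix H-equivalent to $R^T$. Since ${\cal G}$ has roughly $2.8\times 10^9$ HT-classes, a single random walk from $R$ would need on the order of the component size steps before hitting the one specific H-class of $R^T$; this is not feasible. The paper's device is the meet-in-the-middle idea set up earlier in the section: run \emph{two} self-avoiding walks, one from $R$ and one from $R^T$, using row/column switching only, and detect an intersection via the hash tables; this succeeds in roughly $O(\sqrt{s})$ steps. (Alternatively, as the paper remarks, exhibiting any self-dual matrix in ${\cal G}$ suffices, and several sit at distance $3$ from $R$.) Your concatenated path from $R$ to the intersection point and back to $R^T$ then gives the explicit replayable certificate you want. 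A minor further point: your worry that disjointness of a small class from ${\cal G}$ is only ``one-sidedly'' certified is unfounded. ST-classes are connected components of the switching/transposition graph, so once you have exhaustively closed a small seed under these operations and observed that the closure does not contain $R$ (or any vertex of your enumerated portion of ${\cal G}$), the class is a complete component distinct from ${\cal G}$; no inspection of external neighbours is needed, because a complete closure has none.
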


\begin{proof}
As usual, the proof is computational.
Starting from $R$, we found $197122852$ HT-classes
in $9$ iterations of row/column switching and 
taking duals.  

Starting random walks
from $R$ and $R^T$, and performing row/column switching only, we found an
intersection. 
Thus $R \leftrightarrow R^T$.
It follows that $\cal G$ is self-dual. 
\end{proof}

\subsection*{Remarks}

{\hspace*{1.5em}\bf 1.} Although $R$ is not self-dual, we found many self-dual
matrices in the giant class $\cal G$ in the course of various random walks.
Eight such matrices are at distance $3$ from $R$. The existence of a
self-dual matrix in $\cal G$ is sufficient to show that $\cal G$ is
self-dual.

\pagebreak[3]
\begin{table}[ht]	\label{tab:table5}
\begin{center}
\begin{tabular}{|c|c||c|c|}
\hline
size $s_i$ & $\lambda_i$ & size $s_i$ & $\lambda_i$\\
\hline
2136 	& 2  	& 1100 & 1\\
1300   	& 4	& 1069 & 2\\
1276   	& 2	& 1011 & 1\\	
1246    & 1	& 1008 & 1\\	
1205	& 4	& 999 & 1(a)\\	
1188	& 4	& 999 & 2(b)\\
1187	& 1	& 993 & 2\\
1148	& 2	& 958 & 2\\
1134	& 2	& 918 & 3\\
1104	& 2	& 909 & 3\\
\hline
\end{tabular}
\caption{Some large ST-classes for order $33$}
\end{center}
\end{table}

{\bf 2.} In addition to the giant class ${\cal G}$ 
of size about $2.83\times 10^9$,
we found $20$ ST-classes of size $\ge 900$, as listed in Table~5,
with the largest having size $2136$.
Classes of the same size marked ``(a)'' and ``(b)'' are different,
so there are (at least) two disjoint classes of size $999$.

{\bf 3.} Table 5 gives the number of times $\lambda_i$ 
that we found the same class of size~$s_i$.
This statistic is mentioned because it indicates how well we have sampled 
the search space. Excluding the giant
class $\cal G$, the $20$ largest known classes, 
of total size $\sum s_i = 22888$,
were found $\sum \lambda_i = 42$ times. 
Consider the union $U$ of these classes as a sample from the space,
and {\em assume} that the space is sampled uniformly.
Excluding  the $20$ ``hits'' used to select $U$, there are 
$\sum (\lambda_i-1) = 22$ additional
``hits'' on $U$. Thus, the fraction $\rho$ of the space sampled is 
$\rho \approx \sum (\lambda_i-1)/\sum s_i = 22/22888 \approx 1/1040$.
Under our assumption, the probability $P$
of missing a given class of size $\ge  2136$ is bounded by
\begin{equation}
P \le \left(1 - \rho\right)^{2136} < \; {1}/{7}\,.
\label{eq:Pest}
\end{equation}
On the other hand, random sampling hit the giant class
$1.04\times 10^6$ times, and the estimated size of this 
class is $2.83\times 10^9$, implying that $\rho \approx 1/2720$, 
so the estimate~(\ref{eq:Pest}) on $P$ should be viewed with caution.
The discrepancy between the two estimates of $\rho$ 
may be caused by nonuniformity of sampling and/or by an
inaccurate estimate of the size of the giant class.

{\bf 4.} It would be interesting to know more about the graphs
associated with ``small'' ST classes.
We have observed one graph of size~3 (``$\vee$''),
two of size~4 (``$\sqcup$'' and ``$\square$''), and only one
of size~5 (``kite'').
Figure~3 shows an example of each size in the range $10,\ldots,19$.

\pagebreak[3]

{\bf 5.} The reader may have noticed that the graphs displayed in Figures
1--3 are bipartite ($2$-colourable), although \emph{neato} did not draw them
in a way that makes this obvious. Computational experiments have shown that
most, but not all, of the ST classes considered above
have bipartite graphs. In particular,
the graph of the giant component for order $33$ is not bipartite.

\pagebreak[3]
\subsection*{Acknowledgements}

The author thanks Judy-anne Osborn for introducing him to the topic and for
many stimulating discussions; Will Orrick and Paul Zimmermann for their
ongoing collaboration; Brendan McKay for his graph isomorphism program
\emph{nauty}~\cite{McKay09}, which was used to check Hadamard equivalence; 
AT\&T for the \emph{Graphviz} graph visualization software, in particular
\emph{neato}, which was used to draw the figures; and the Mathematical
Sciences Institute (ANU) for computer time on the cluster ``orac''.

\vspace*{\fill}
\begin{figure}[hb] \label{fig:3}
\begin{center}
\includegraphics[width=10cm]{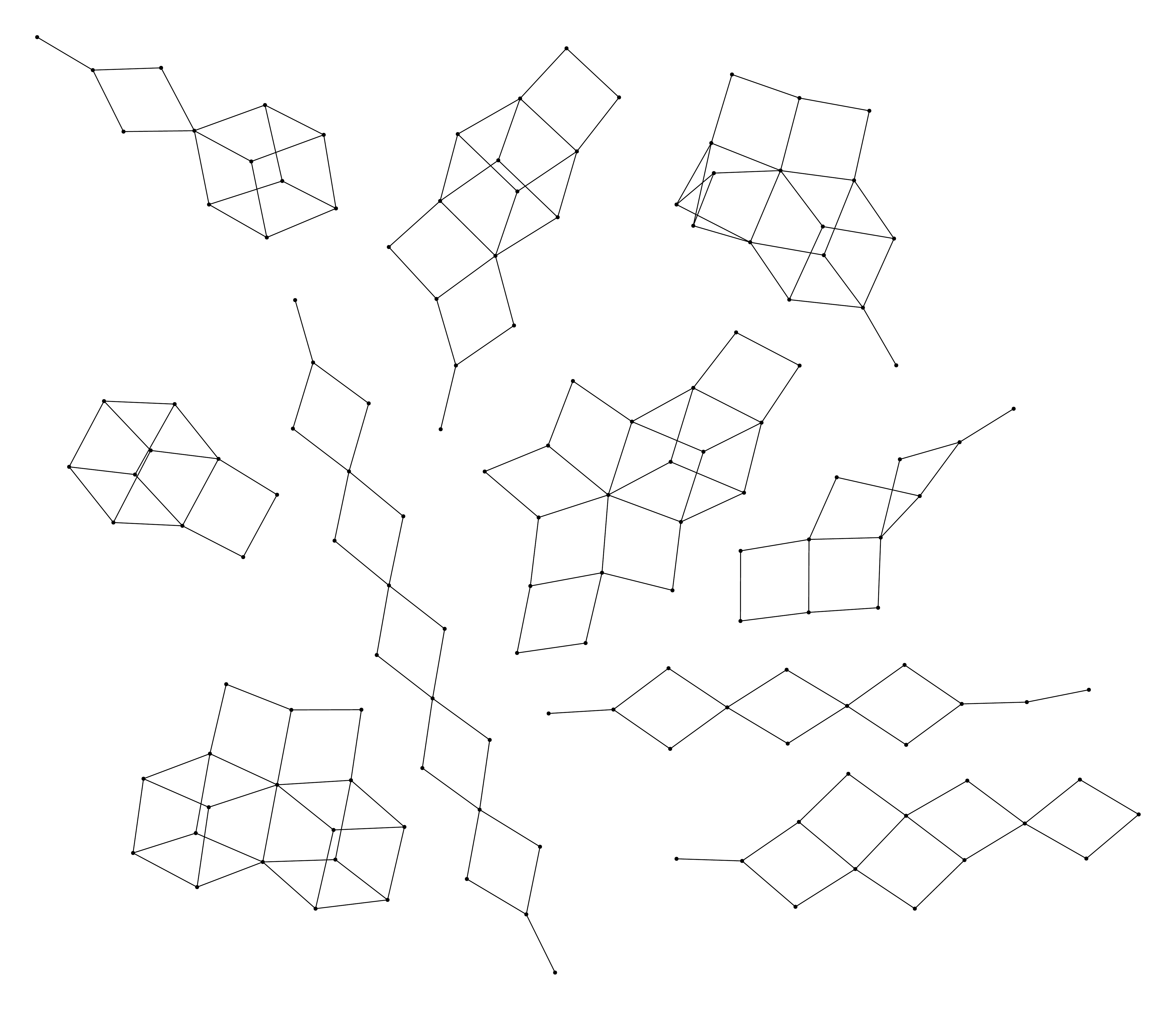}
\end{center}
\caption{One ST-class of each size $10,\ldots,19$ for order $33$}
\end{figure}
\vspace*{\fill}

\pagebreak[4]		

\end{document}